
\documentclass[reqno,11pt]{amsart}

\usepackage{amsmath, amsthm, a4, latexsym, amssymb, pstricks, setspace, mathtools}
\usepackage{indentfirst}
\usepackage[dvips]{graphicx}
\usepackage{bbm}
\usepackage{mathrsfs}

\usepackage{titlesec}
 
\titleformat{\section}[runin]
{\normalfont\bfseries}
{\thesection.}{.5em}{}[.]
\titlespacing{\section}
{\parindent}{1.5ex plus .1ex minus .2ex}{\wordsep}

\setlength{\topmargin}{0in}
\setlength{\headheight}{0.12in}
\setlength{\headsep}{.40in}
\setlength{\parindent}{1pc}
\setlength{\oddsidemargin}{-0.1in}
\setlength{\evensidemargin}{-0.1in}

\marginparwidth 20pt 
\marginparsep 10pt
\headheight 12pt
\headsep 25pt
\footskip 30pt
\textheight  625pt 
\textwidth 170mm
\columnsep 10pt
\columnseprule 0pt

\setlength{\unitlength}{1mm}

\setlength{\parindent}{20pt}
\setlength{\parskip}{2pt}

\def\@rmrk#1#2{\refstepcounter
    {#1}\@ifnextchar[{\@yrmrk{#1}{#2}}{\@xrmrk{#1}{#2}}}

\makeatletter\@addtoreset{equation}{section}\makeatother

 \sloppy
 \parskip 0.8ex plus0.3ex minus0.2ex
 \parindent0.0em

 \newfont{\bfit}{cmbxti10 scaled 1200}

 \newcommand{\eps}{\varepsilon}

 \newcommand{\R}{\mathbb{R}}
 \newcommand{\N}{\mathbb{N}}
 \newcommand{\Z}{\mathbb{Z}}
 
 \newcommand{\prob}{\mathbb{P}}

 \newcommand{\1}{{\sf 1}}

 \newcommand{\sfrac}[2]{\mbox{$\frac{#1}{#2}$}}
 
 \newcommand{\ssup}[1] {{\scriptscriptstyle{({#1}})}}


\renewcommand{\subsection}{\secdef \subsct\sbsect}
\newcommand{\subsct}[2][default]{\refstepcounter{subsection}
\vspace{0.15cm}
{\flushleft\bf \arabic{section}.\arabic{subsection}~\bf #1  }
\nopagebreak\nopagebreak}
\newcommand{\sbsect}[1]{\vspace{0.1cm}\noindent
{\bf #1}\vspace{0.1cm}}

{\nopagebreak {\hfill\rule{2mm}{2mm}}
\\ }

\newtheorem{theorem}{Theorem}[section]
\newtheorem{thm}{Theorem}
\newtheorem{lemma}[theorem]{Lemma}
\newtheorem{corollary}[theorem]{Corollary}
\newtheorem{prop}[theorem]{Proposition}

\newcommand{\p}{\mathbb{P}}

\newcommand{\E}{\mathbb{E}}

\newtheoremstyle{thm}{1.5ex}{1.5ex}{\itshape\rmfamily}{}
{\bfseries\rmfamily}{}{2ex}{}

\newtheoremstyle{rem}{1.3ex}{1.3ex}{\rmfamily}{}
{\itshape\rmfamily}{}{1.5ex}{}
\theoremstyle{rem}
\newtheorem{remark}{{\slshape\sffamily Remark}}[]

\refstepcounter{subsubsection}

\def\thebibliography#1{\section*{References}
  \list%
  {\arabic{enumi}.}
    {\settowidth\labelwidth{[#1]}\leftmargin\labelwidth
    \advance\leftmargin\labelsep
    \parsep0pt\itemsep0pt
    \usecounter{enumi}}
    \def\newblock{\hskip .11em plus .33em minus .07em}
    \sloppy                   
    \sfcode`\.=1000\relax}



\begin{document}

\title{Optimal embeddings by unbiased shifts 
of Brownian motion}
\author{Peter M\"orters and Istv\'an Redl}

\maketitle
\ \\[-2cm]

\begin{abstract}
An unbiased shift of the two-sided Brownian motion $(B_t \colon t\in\R)$ is a random time $T$ such that $(B_{T+t} \colon t\in\R)$ is still a two-sided Brownian motion.  Given a pair $\mu, \nu$ of orthogonal probability measures, an unbiased shift $T$ solves the embedding problem, if $B_0\sim\mu$ implies $B_{T}\sim\nu$. 
A solution to this problem was given by Last  et al.~(2014), based on earlier work of Bertoin and Le Jan (1992), and Holroyd and Liggett (2001). 
In this note we show that this solution minimises $\E \psi(T)$ over all nonnegative unbiased solutions~$T$,  simultaneously for all nonnegative, concave functions~$\psi$. Our proof is based on a discrete concavity inequality that may be of independent interest.
\end{abstract}
\ \\[-5mm]

{\footnotesize
{\bf MSC (2010):} 60J65 (primary)  39B62, 49J55, 60G40 (secondary).\\
{\bf Keywords:} Brownian motion, Skorokhod embedding, shift-coupling, unbiased shift, optimal transport, concavity.}\\[-2mm]

\section{Introduction}\label{S:Intro}

A random time $T$ is called an \emph{unbiased shift} of the two sided Brownian motion $(B_{t} \colon t \in \R)$ if the shifted process $(B_{T+t} \colon t\in \R)$ is again a two-sided Brownian motion. This notion was introduced and studied by Last et al.\ in~\cite{LMT14}, where the additional requirement of measurability
of~$T$ with respect to the process $(B_{t} \colon t \in \R)$ is made, which we drop here for greater generality. The concept of unbiased shifts goes back to a similar idea for coin tosses, known as the extra head problem in~\cite{HL01, LT02, HP05}, and, more generally, the concept of {shift-couplings}, see the work of Thorisson~\cite{Th00}.%
\medskip

In~\cite{LMT14} the authors solve the \emph{embedding problem} for unbiased shifts. Namely, given two orthogonal probability measures $\mu, \nu$ on the real line such that $B_0 \sim \mu$ they construct a nonnegative unbiased shift~$T_*$ such that  $B_{T_*} \sim \nu$. The solution of~\cite{LMT14}  can easily be described explicitly. Let $(L^x_t \colon x\in\R, t\geq 0)$ be a  continuous version of the local time for $(B_{t} \colon t \geq0)$. We use this to build two continuous additive functionals by letting
$$L^\nu_t:=\int L^x_t \, \nu(dx), \mbox{ and } L^\mu_t:=\int L^x_t \, \mu(dx),$$
and obtain the solution as
\begin{equation}\label{tstar}
T_*:=\inf\{ t>0 \colon L^\nu_t =L^\mu_t\}.\phantom{_{M_{_M}}}
\end{equation}
Note that $T_*$ occurred in the context of one-sided (Skorokhod) embedding problems in the work of
Bertoin and Le Jan~\cite{BL92}, is reminiscent of extra head schemes in~\cite{LT02, HP05} or~\cite{MR15}, and turns out to be closely related to allocation and transport problems, see~\cite{HPPS09, BH13}. 

The present paper is concerned with the problem whether this natural and explicit solution of the embedding
problem is optimal in the sense that it minimises certain moments. To see which moments should be looked at, Last et al.~\cite{LMT14} have investigated finiteness of moments for unbiased shifts,  and have shown that, for any unbiased shift~$T$ embedding a measure orthogonal to the initial distribution,  the random variable  $|T|$ has infinite square-root moment. Under the additional assumption that $T$ is a nonnegative stopping time they obtain that $T$ has an infinite fourth-root moment.
It is conjectured in~\cite{LMT14} that this holds without the assumption that $T$ is a stopping time, and 
this conjecture will be confirmed in this paper, see Remark~1(c) below.  The results of~\cite{LMT14} show that in order to understand optimality we need to focus on moments of fractional order smaller than $\frac12$, or more generally moments taken with respect to concave gauge functions.  \smallskip

We denote by $\prob_\mu, \E_\mu$ probability and expectation on a probability space supporting
a two-sided Brownian motion $(B_{t} \colon t \in \R)$ with $B_0\sim\mu$.
We can now state our main result.\medskip
 
\begin{thm}\label{T:Brownian_optimality}
Assume that $\mu$ and $\nu$ are two orthogonal probability measures on $\R$.  
For any non-negative  unbiased shift $T$ embedding $\nu$ and all $\psi\colon  [0,\infty) \to [0,\infty)$ concave,
\[
\E_{\mu} \psi(T_{*})\le \E_{\mu}\psi(T), \phantom{_{_{_g}}}
\]
where $T_{*}$ is the unbiased shift constructed in \eqref{tstar}.
\end{thm}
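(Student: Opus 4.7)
My first move is a standard reduction. Any nonnegative concave function $\psi\colon[0,\infty)\to[0,\infty)$ is automatically nondecreasing (a strict decrease anywhere would, by concavity, force $\psi$ below zero eventually), and it admits the Choquet-type representation $\psi(t)=\psi(0)+\int_{(0,\infty)}(t\wedge c)\,m(dc)$ for a positive Borel measure $m$. Hence the theorem reduces to establishing the single-parameter family of inequalities $\E_\mu(T_*\wedge c)\le\E_\mu(T\wedge c)$ for every $c>0$, or equivalently $\E_\mu(c-T_*)_+\ge\E_\mu(c-T)_+$ for every $c>0$. This packages all concave gauge functions at once and is essentially cost-free.

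The heart of the argument, as advertised in the abstract, would be a discrete concavity inequality. I expect it to take the following shape: in a suitable finite combinatorial model encoding a discrete version of the unbiased-shift condition, the ``greedy'' selection of indices corresponding to a discrete analog of $T_*$ minimizes $\E\psi$ simultaneously for all nonnegative concave $\psi$. The natural line of attack is a swap argument, showing that any other valid selection rule can be transformed into the greedy one through a sequence of elementary, concavity-preserving swaps, each of which can only weakly decrease $\E\psi$ by the basic fact that for concave $\psi$ the quantity $\psi(a)+\psi(b)$ dominates $\psi(a-h)+\psi(b+h)$ whenever $0\le a-h\le a\le b\le b+h$.

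To lift this combinatorial inequality back to Brownian motion, I would use the Palm-calculus framework of Last--M\"orters--Thorisson. The continuous additive functionals $L^\mu$ and $L^\nu$, together with an arbitrary unbiased shift $T$, encode a stationary random matching between ``$\mu$-mass'' and ``$\nu$-mass'' of local time on the line, with $T_*$ corresponding to the canonical ``leftmost'' matching at the origin. Approximating this matching by finite ones (for instance, by conditioning on finitely many local-time increments or using a dyadic discretization of the processes $L^\mu,L^\nu$), applying the discrete concavity inequality to each, and passing to the limit via monotone convergence---valid because $T\wedge c$ is bounded for each fixed $c$---would yield the continuous statement.

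The hard part is clearly the discrete concavity inequality. The subtlety is to find a combinatorial framework flexible enough to accommodate all nonnegative unbiased shifts---including genuinely anticipating ones that are not stopping times---while rigid enough for the discrete analog of $T_*$ to be canonically identifiable as a uniform concave minimizer. A secondary obstacle is the passage to the limit in the third paragraph: uniform integrability must be controlled delicately, since by the results recalled in the introduction $T$ may have infinite fractional moments of order up to (but not including) $\tfrac12$, so the truncation by $c$ is essential and the final monotone/Fatou step has to be executed before sending $c\to\infty$.
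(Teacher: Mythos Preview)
Your broad strategy---a discrete concavity inequality via swaps, then lifted to Brownian motion---matches the paper, and your Choquet reduction is valid though unnecessary (the paper keeps general concave $\psi$ throughout). But the lifting step has a genuine gap: ``approximate by finite matchings and pass to the limit via monotone convergence'' does not supply the two structural ingredients the proof actually needs. First, the paper works \emph{pathwise} on intervals $\mathcal E=[a,\tau_*(a)]$ (``excursions'') which $\tau_*$ maps onto themselves; the discrete inequality is applied on such a bounded interval, and the discrete-to-continuous limit goes by bounded convergence---no truncation by $c$ is needed, so your uniform-integrability worry is misplaced at this stage. Second, and this is the missing idea, the passage from the pathwise cost inequality on an excursion to the moment inequality $\E_\mu\psi(T_*)\le\E_\mu\psi(T)$ is not by any direct approximation but by the \emph{ergodic theorem} for shifts along the inverse local time of $\ell^{\mu+\nu}$: one takes excursions $[\sigma(u),\rho(u)]$ growing to cover $\R$, normalises both sides of the pathwise inequality by $\ell^{\mu+\nu}([\sigma(u),\rho(u)])$, and identifies the almost-sure limits as $\tfrac12\E_\mu\psi(T)$ and $\tfrac12\E_\mu\psi(T_*)$, valid whether or not these expectations are finite. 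Without this ergodic step there is no mechanism in your outline connecting a discretised matching to $\E_\mu\psi(T)$.

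There is also a subtlety in the discrete lemma you do not mention. Since the transport rule $\theta$ associated with a general $T$ can carry mass across the boundary of $\mathcal E$, the pathwise cost must count pairs $(s,t)$ with $s\in\mathcal E$ \emph{or} $t\in\mathcal E$, double-counting those with both inside. The discrete inequality accordingly compares $\sum_{i=1}^N\sum_{j\ge1}+\sum_{i\ge1}\sum_{j=1}^N$ against $2\sum_{i=1}^N\psi(\tau(a_i)-a_i)$. Your swap argument is the right tool, but this boundary bookkeeping is essential to the formulation and absent from your sketch.
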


\begin{remark}\ \\[-5mm]
\begin{itemize}
\item[(a)] This result is closely related to a similar optimality result in~\cite{MR15} for the case of discrete-time Markov chains. Although our proof relies on a discrete approximation, we have been unable to derive Theorem~\ref{T:Brownian_optimality} directly from the results of~\cite{MR15}. Instead, we use a concavity inequality, stated as Lemma~\ref{3rdconcavity}$(b)$ below, that may be of independent interest.\\[-3mm]

\item[(b)]  We make no assumptions on measurability of $T$ with respect to the Brownian motion, i.e., $T$ is allowed to use additional randomisation beyond that taken from the Brownian motion.\\[-3mm]

\item[(c)]  As $T_*$ is a stopping time, we have $\E_{\mu} T_{*}^{_{1/4}}=\infty$ by Theorem~8.1 in~\cite{LMT14},
and hence $$\E_{\mu} T_{\phantom{*}}^{_{1/4}}=\infty$$ for \emph{all} non-negative solutions~$T$ of the embedding problem.\\[-3mm]

\item[(d)]  Under mild conditions on $\mu, \nu$ we have $\E_{\mu} T_{*}^\alpha<\infty$ for $\alpha<\frac14$
by Theorem~8.2 in~\cite{LMT14}. \\[-3mm]
\item[(e)] The result is strongly reminiscent of optimality results for Skorokhod embeddings, as given, for example, in the classical papers~\cite{CH04, CH07}. The use of optimal transport ideas in connection to optimal Skorokhod embeddings is also the topic of a lot of current research,  of which the paper~\cite{BH13} is a major highlight.\\[-2mm]
\end{itemize}

\end{remark}

\begin{remark}
If $\mu$ and $\nu$ are \emph{not} orthogonal we can write 
\smash{$\mu=\tilde\mu+\mu \wedge \nu$} and
\smash{$\nu=\tilde\nu+\mu \wedge \nu$} for orthogonal sub-probability measures \smash{$\tilde\mu, \tilde\nu$}. Denoting the total mass of both measures by $\rho$ one can enlarge the probability space to include a Bernoulli variable $U$, depending only on $B_0$, with \smash{$\p_\mu(U=1)=\rho$} such that on the event $U=1$ we have 
\smash{$B_0 \sim \frac1\rho \, \tilde\mu$} and on the event $U=0$ we have 
\smash{$B_0\sim \frac1{1-\rho}\, (\mu \wedge \nu)$}. Then 
$$T_*=U \inf\{ t>0 \colon L^{\tilde\nu}_t =L^{\tilde\mu}_t\}$$
is a non-negative unbiased shift embedding $\nu$, which satisfies the inequality in Theorem~\ref{T:Brownian_optimality}. This follows by splitting the expectations according to the value of $U$, and using monotonicity of $\psi$ on the event $U=0$, and 
applying the theorem to the orthogonal probability measures $\frac1\rho \tilde\mu$, $\frac 1\rho \tilde\nu$
on the event $U=1$.\end{remark}

\section{Preliminaries and outline of the proof}\label{S:Prelim}

We recall the framework of~\cite{LMT14}, which will be used as a main reference 
throughout this paper. Let  $(\Omega, \mathscr{A},\p_{0})$ be such that
$\Omega$ is the space of continuous functions $\omega\colon \R \to \R$, equipped with the 
Borel $\sigma$-algebra~$\mathscr{A}$ and the distribution  $\p_{0}$ of two-sided Brownian motion with $\omega_0=0$.   For all $x \in \R$
we define $\p_{x}$ to be the distribution of $\omega+x$  and introduce the $\sigma$-finite measure 
$\p:= \int \p_{x} \, dx$, which is invariant under the shifts $\omega\mapsto s\omega$ 
defined by $(s\omega)(\cdot)=\omega(\cdot-s)$, for all $s\in\R$. As usual, expectations with respect to $\p, \p_{x}$ will be denoted by $\E, \E_{x}$, respectively. 
\medskip

\pagebreak[3]

Let $(\ell^{x} \colon x\in \R)$ be a continuous version of the family of local times of $\omega$ at level $x$, and 
for any locally finite measure $\nu$ on $\R$ set $\ell^{\nu}:=\int \ell^{x}\,\nu(dx)$. We note that the local times $\ell^{x}$ and $\ell^{\nu}$ are random measures and depend on $\omega \in \Omega$.     
When necessary, this dependence will be indicated as $\ell^{\cdot}_{\omega}$. We may and shall assume that
$\ell^{x}_{z+\omega}= \ell^{x-z}_{\omega}$, for all $x,z\in\R$, for $\p$-almost every~$\omega$.
\pagebreak[3]

A \emph{transport rule} is a Markov kernel $\theta\colon \Omega \times \R \times \mathscr{B} \to [0,1]$, where $\mathscr{B}$ is the Borel \mbox{$\sigma$-algebra} on $\R$. We use the notation
$\theta_{\omega}(ds,dt)=\theta_{\omega}(s,dt)\ell^{\mu}(ds)$,  and $\theta_{\omega}(A,B)=\int_{A}\theta_{\omega}(s,B)\ell^{\mu}(ds)$ for any $A, B \in \mathscr{B}$. 
An interpretation of the Markov kernels $\theta(s,\cdot)$ is that each site $s \in \R$
sends out unit mass to the real line. A transport rule $\theta$ is called \emph{translation invariant} if $\theta_{\omega}(s,A)=\theta_{t\omega}(s-t,A-t)$,
for any $(s,A) \in \R \times \mathscr{B}$, $t\in\R$, for $\p$-almost every $\omega$.

We now briefly summarise results of \cite{LMT14}, which are derived from the abstract results of~\cite{LT09}. Given a translation invariant transport rule $\theta$ we obtain an unbiased shift $T$ by letting
\begin{equation}\label{pmu}
\prob_\mu(\omega\in A, T \in B) = \iint_A \theta_\omega(0, B) \,\p_x(d\omega)\, \mu(dx), \mbox{ for } A\in\mathscr A, B\in\mathscr B.
\end{equation}
Conversely, given an unbiased shift $T$ we can construct a  transport rule $\theta$ by letting
$\theta_\omega(s, B)= 
\prob_\mu(T \in B-s \, | \, s\omega )$, for $s\in\R, B\in\mathscr{B}, \omega\in\Omega$.
We use a suitably regularised  version of conditional probabilities so that $\theta$ is a
translation invariant transport rule and~\eqref{pmu} holds.

The transport rules associated in this way with nonnegative unbiased shifts are \emph{forward looking} 
in the sense that $\theta_{\omega}(s,(-\infty,s))=0$ for all $s\in \R$. An unbiased shift $T$ solves the embedding problem for a pair of orthogonal probability measures $\mu, \nu$ if and only if the associated
transport rule satisfies the \emph{balancing property} 
\[
\int\theta_{\omega}(s, dt) \,  \ell_{\omega}^{\mu}(ds) 
=\ell_{\omega}^{\nu}(dt) \qquad \mbox{ for $\p$-almost all~$\omega$}.
\]
If the unbiased shift is not randomised, i.e. a measurable function of~$\omega$, 
then the associated $\theta$ is an \emph{allocation rule}, i.e. it is of the form $\theta_{\omega}(s, A)= \1_A(\tau_{\omega}(s))$
for some $\tau_\omega\colon\R\to\R$. In this case each site $s \in \R$ is assigned to a new site $\tau(s) \in \R$, where again we drop the dependence on $\omega$ from the notation.
The allocation rule associated to the random times $T_*$ is given by
\begin{equation}\label{E:tau_star}
\tau_{*}(s):=\inf\{t>s \colon \ell^{\mu}[s,t]=\ell^{\nu}[s,t]\}, \qquad \mbox{ for all $s \in \R$.}
\end{equation}

Let us now outline the proof of Theorem~\ref{T:Brownian_optimality}. The first part looks at what happens \emph{pathwise}. With every transport rule we associate a local cost. Given a fixed $\omega$, we show that on carefully chosen intervals called excursions, the best possible cost is offered by the allocation rule~$\tau_{*}$. We do this by proving an analogous  discrete result and then taking a suitable limit. The second part of the proof uses  \emph{ergodic theory} to translate the local cost optimality of $\tau_{*}$ into a result on the moments of~$T_*$.

\section{Pathwise level}\label{S:pathwise}
In what follows, we fix a path $\omega$. An \emph{excursion} $\mathcal{E}$ is any interval 
of the form $[a,\tau_*(a)]$, for $a \in \R$. Observe that $\tau_*$ maps $\mathcal{E}$ onto itself.
Our main goal is to show that the allocation rule $\tau_{*}$ offers 
the best possible cost inside an arbitrary excursion $\mathcal{E}$.
More precisely we will devote this section to proving the following proposition.  

\begin{prop}\label{L:pathwise}
Given an excursion $\mathcal{E}$, for any forward-looking 
transport rule $\theta$ balancing $\ell^{\mu}$ and~$\ell^{\nu}$,
\begin{equation}\label{E:cost_pathwise}
\iint_{\mathcal{E} \times \R} \psi(t-s)\,\theta(ds,dt)+\iint_{\R \times \mathcal{E}}\psi(t-s)\,\theta(ds,dt) \ge 2\int_{\mathcal{E}}\psi(\tau_{*}(s)-s)\ell^{\mu}(ds),
\end{equation}
for all $\psi\colon [0,\infty) \to [0,\infty)$ concave.
\end{prop}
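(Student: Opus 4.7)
The plan is to follow the paper's own outline: prove a discrete analogue of~\eqref{E:cost_pathwise} by a combinatorial swap argument and then recover the continuum statement by approximation. Fix a large compact window $W \supset \mathcal{E}$ and approximate $\ell^{\mu}|_W$ and $\ell^{\nu}|_W$ by atomic measures carrying $n$ equal atoms placed at their quantiles, writing $a_1 \le a_2 \le \cdots$ for the source atoms and $b_1 \le b_2 \le \cdots$ for the sink atoms. A balanced, forward-looking transport rule $\theta$ corresponds in the limit of refinement to a bijection $\sigma$ between source and sink atoms with $a_i \le b_{\sigma(i)}$. The defining property of the excursion $\mathcal{E} = [a,\tau_*(a)]$ forces its $\mu$- and $\nu$-atoms to be equinumerous and, once reindexed in increasing order as $a_1' \le \cdots \le a_n'$ and $b_1' \le \cdots \le b_n'$, to satisfy $a_i' \le b_i'$ for every $i$; the allocation $\tau_*$ restricts to the Bertoin--Le Jan \emph{nested} matching $\sigma_*$ on $\mathcal{E}$.

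In this discrete setting the proposition reduces to
\[
\sum_{a_i \in \mathcal{E}} \psi\bigl(b_{\sigma(i)} - a_i\bigr) \;+\; \sum_{b_j \in \mathcal{E}} \psi\bigl(b_j - a_{\sigma^{-1}(j)}\bigr) \;\ge\; 2 \sum_{a_i \in \mathcal{E}} \psi\bigl(b_{\sigma_*(i)} - a_i\bigr),
\]
which I take to be the content of Lemma~\ref{3rdconcavity}$(b)$ and which I would prove in two stages. \emph{Leakage stage:} pair each outgoing bond (a source $a_i \in \mathcal{E}$ mapped to a sink $b_{j'}$ to the right of $\mathcal{E}$) with an incoming bond (a source $a_{i'}$ to the left of $\mathcal{E}$ mapped to a sink $b_j \in \mathcal{E}$); these come in equal number by balancing. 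Replacing the pair by the internal bond $a_i \to b_j$ and the exterior pass-through $a_{i'} \to b_{j'}$ changes the left-hand side by $2\psi(b_j-a_i) - \psi(b_{j'}-a_i) - \psi(b_j-a_{i'})$, which is $\le 0$ by monotonicity of $\psi$ (automatic for non-negative concave functions on $[0,\infty)$), since $b_{j'} \ge b_j$ and $a_{i'} \le a_i$. \emph{Rearrangement stage:} once no leakage remains, $\sigma$ is a forward-looking permutation of the $\mathcal{E}$-atoms, and the standard two-point concavity swap --- whenever $a_i < a_j$ and $b_{\sigma(i)} < b_{\sigma(j)}$ with $a_j \le b_{\sigma(i)}$, the identity $(b_{\sigma(i)}-a_i)+(b_{\sigma(j)}-a_j) = (b_{\sigma(j)}-a_i)+(b_{\sigma(i)}-a_j)$ together with the ``more spread pair has smaller $\psi$-sum'' inequality for concave $\psi$ --- strictly reduces the cost, and iterating drives $\sigma$ to the nested matching $\sigma_*$.

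The main obstacle is marrying the two stages cleanly: after a leakage closure the fresh interior bond $a_i \to b_j$ need not satisfy $a_i \le b_j$, so one cannot naively eliminate all leakage first and then rearrange inside $\mathcal{E}$. I would address this by selecting the leakage pair at each step in an extremal way --- for instance the leftmost outgoing source together with the rightmost incoming sink that is still compatible with the forward-looking constraint, whose existence is ensured by the excursion property $a_i' \le b_i'$ --- or alternatively by running both reductions in tandem along the recursive decomposition of $\mathcal{E}$ into sub-excursions, so that leakage and internal rearrangement are handled simultaneously at each nesting level. Once the discrete inequality is secured, the passage to the continuum is routine: weak convergence of the atomic approximations to $\ell^{\mu}|_W, \ell^{\nu}|_W$, continuity of $\psi$ on $(0,\infty)$, and tightness of forward-looking transport plans on $W$ transfer the inequality to the limit, while the truncation $W \uparrow \R$ is controlled by monotonicity, since contributions from outside $W$ only add non-negative terms to the left-hand side.
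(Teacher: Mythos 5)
Your overall strategy---discretise, prove a combinatorial inequality by swap/repair arguments, then pass to the limit---matches the paper's, but there are several genuine gaps in the execution.

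\textbf{Reduction to a bijection is not justified.} You assert that a balanced, forward-looking transport rule corresponds in the limit of refinement to a bijection $\sigma$. This fails: $\theta$ is a Markov kernel that may split mass, and the discretisation $\pi_{i,j}= \frac{n}{M}\,\theta((a_i,a_{i-1}],(b_{j-1},b_j])$ is a general nonnegative matrix, not a permutation. Moreover, the relevant marginal constraints only hold for rows and columns indexed inside the excursion window (as in Lemma~\ref{3rdconcavity}$(b)$), so the matrix is not even doubly stochastic globally and a Birkhoff--von Neumann decomposition does not straightforwardly salvage the reduction. The paper states and proves the discrete inequality directly for such matrices, using a mass-moving ``crossing repair'' that never needs to factor $\pi$ through permutations.

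\textbf{The two-stage leakage/rearrangement scheme has an unresolved defect that you yourself flag.} Closing a leakage pair can create an interior bond $a_i\to b_j$ with $a_i>b_j$, violating the forward-looking constraint and putting $\psi$ outside its domain. Your proposed fixes (extremal selection of the leakage pair, or interleaving with sub-excursions) are sketched but not carried out, and it is not obvious that either works. The paper avoids this entirely: the single repair move on a crossed quadruple $a_k<a_i<b_j<b_l$ with $\pi_{kj},\pi_{il}>0$ only ever increases $\pi'_{ij}$ (with $a_i<b_j$ given) and $\pi'_{kl}$ (with $a_k<b_l$ automatic), so the constraint $\pi_{ij}=0$ whenever $a_i>b_j$ is preserved at every step. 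The systematic ordering of repairs then drives $\pi$ to the stable allocation without ever needing a separate ``leakage'' phase.

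\textbf{The passage to the continuum is not routine.} You treat it as weak convergence plus continuity of $\psi$, but the delicate point is the convergence of the discrete stable allocation $\tau_n(g_n(a))$ to $\tau_*(a)$ for $\ell^\mu$-a.e.\ $a$. This is the content of Lemma~\ref{L:tau_n convergence}, whose proof spans five steps and hinges on Lebesgue density-type estimates (Evans--Gariepy) to rule out, near $\ell^\mu$-typical points, clusters of $b$-atoms that would make $f_n$ touch zero prematurely. Without this, the right-hand side of the discrete inequality need not converge to $\int_{\mathcal E}\psi(\tau_*(s)-s)\,\ell^\mu(ds)$. The left-hand side also needs the subadditivity bound $|\psi(h_n(b)-g_n(a))-\psi(b-a)|\le\psi(h_n(b)-b)+\psi(a-g_n(a))$ together with bounded convergence, which you do not mention.

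In short, your intuition about which swaps concavity makes favourable is correct, and the excursion structure is used correctly, but the proposal under-specifies the discrete lemma (wrong class of objects), leaves the forward-looking constraint unprotected through the reduction, and understates the analytic work in the limit.
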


The left hand side in \eqref{E:cost_pathwise} is called the \emph{cost} of the transport rule
$\theta$ over the interval~$\mathcal{E}$. Note that $\tau_*(\mathcal{E})=\mathcal{E}$ and hence
the right hand side is then the cost of the 
allocation rule given by~$\tau_*$ over the same interval. By subtracting a constant from both sides of the equation we may henceforth assume that $\psi(0)=0$.
\pagebreak[3]

We start by establishing a similar result in a discrete setting. The inequality we establish below 
is of a general nature and may be of independent interest. A map $\tau\colon A \to B$ between two discrete and disjoint sets $A, B \subset \R$ is the \emph{stable allocation map} if 
$$\tau(a)=\min\big\{b\in B \colon b>a, |B\cap[a,b]| =  |A\cap[a,b]|\big\}.$$

\begin{lemma}\label{3rdconcavity}
Let $a_1>a_2>a_3>\ldots$ and $b_1<b_2<b_3<\ldots$ be disjoint real sequences, such that 
we have $a_n \searrow -\infty$ and $b_n \nearrow \infty$. 
\begin{itemize}
\item[(a)] The stable allocation map
$\tau\colon  \{\ldots, a_3, a_2, a_1\} \to \{b_1, b_2, b_3, \ldots\}$ is well-defined and there exists $N\in\mathbb N$ 
such that $\tau(a_i)=b_i$ for all $i\geq N$.
\item[(b)]
For every concave function $\psi\colon [0,\infty) \to [0,\infty)$ and nonnegative matrix
$\pi = (\pi_{i,j} \colon i,j\in{\mathbb N} )$ with the properties that\\[-2mm]
\begin{itemize}
\item$\pi_{i,j}=0$ if $a_i>b_j$, \\[-2mm]
\item $\sum_{j=1}^\infty \pi_{i,j}=1$ for all $i\in\{1,\ldots,N\}$,\\[-2mm]
\item $\sum_{i=1}^\infty \pi_{i,j}=1$ for all $j\in\{1,\ldots,N\}$,\\[-2mm]
\end{itemize}
we have
\begin{equation}\label{ineq}
\sum_{i=1}^{N}\sum_{j=1}^{\infty} 
+ \sum_{i=1}^{\infty}\sum_{j=1}^{N}\pi_{i,j} \psi(b_{j}-a_{i}) 
\ge 2 \sum_{i=1}^{N}\psi(\tau(a_i)-a_{i}).
\end{equation}
\end{itemize}
\end{lemma}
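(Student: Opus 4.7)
For part~(a), the plan is to analyse the balance walk from $a_i$: setting $f(t):=|A\cap[a_i,t]|-|B\cap[a_i,t]|$ (with $A=\{a_k\}$, $B=\{b_j\}$), one has $f(a_i)=1$, $f$ increases by one at each $a_k>a_i$ (only $i-1$ such points exist), decreases by one at each $b_j>a_i$, and drifts to $-\infty$; its first zero must therefore occur at a $b$-point, which is precisely $\tau(a_i)$. For the eventual identity, choose $N$ large enough that $a_N<b_1$ and $b_{N-1}\geq a_1$; for $i\geq N$ the interval $[a_i,b_i]$ then contains exactly $\{a_1,\dots,a_i\}$ and $\{b_1,\dots,b_i\}$, and a direct enumeration shows the first balance of the walk occurs at $b_i$.

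For part~(b), two structural consequences of (a) drive the argument. First, $\tau$ is injective: if $\tau(a_i)=\tau(a_{i'})=b_j$ with $i<i'$, the sub-interval $[a_{i'},a_i)$ contains equal numbers ($i'-i$) of $a$- and $b$-points, which forces the walk from $a_{i'}$ to return to zero strictly before $b_j$, contradicting $\tau(a_{i'})=b_j$. Combined with $\tau(a_i)=b_i$ for $i\geq N$, injectivity makes $\tau$ restrict to a bijection $\{a_1,\dots,a_N\}\to\{b_1,\dots,b_N\}$, with the intervals $[a_i,\tau(a_i)]$ pairwise nested or disjoint. The analytic input is the Monge concavity inequality $\psi(w-x)+\psi(z-y)\leq\psi(w-y)+\psi(z-x)$ for $x\leq y\leq z\leq w$, which is immediate from concavity of $\psi$ and $\psi(0)=0$.

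The argument then proceeds by a double exchange. The marginal constraints on $\pi$ give $\sum_{i\leq N,\,j>N}\pi_{i,j}=\sum_{i>N,\,j\leq N}\pi_{i,j}$, so outgoing and incoming boundary masses can be paired. For each pair $(i,j)$, $(i',j')$ with $i\leq N<i'$, $j'\leq N<j$ and satisfying $a_i\leq b_{j'}$, one transfers the common mass from $\{(i,j),(i',j')\}$ to $\{(i,j'),(i',j)\}$; the marginals are preserved, and combining Monge concavity with the monotonicity estimate $\psi(b_j-a_{i'})\geq\psi(b_{j'}-a_i)$ gives $\psi(b_j-a_i)+\psi(b_{j'}-a_{i'})\geq 2\psi(b_{j'}-a_i)$, so the left-hand side of~\eqref{ineq} does not increase (the weight-$2$ contribution of the newly created interior entry is compensated by the two discarded weight-$1$ boundary entries). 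After all admissible exchanges the coupling is concentrated on $\{1,\dots,N\}^2$ and, by the marginals, is doubly stochastic; a second application of Monge concavity (iterated across the nested structure of $\tau$) shows its cost is minimised by the permutation associated to $\tau$, and doubling gives the right-hand side of~\eqref{ineq}.

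The most delicate step, and where I expect the bulk of the combinatorial work, is handling boundary pairs obstructed by $a_i>b_{j'}$, where a naive exchange would put mass on the forbidden entry $(i,j')$. I would resolve this either by a more careful pairing consistent with the nested structure of $\tau$ (for instance, processing outgoing rows in decreasing order of $a_i$ and routing them to incoming columns $j'$ with $b_{j'}>a_i$), or, when no such pairing exists, by a direct lower estimate showing that the escape cost of the obstructed rows is already large enough to absorb into the bound. Verifying that one of these two options always succeeds is, in my view, the combinatorial heart of the proof.
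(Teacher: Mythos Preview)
For part~(a), your proposed $N$ (the least integer with $a_N<b_1$ and $b_{N-1}\geq a_1$) does not in general give $\tau(a_i)=b_i$ for all $i\geq N$. Take $a_k=101-k$ for $1\leq k\leq 10$, $a_k=10-k$ for $k\geq 11$, $b_j=j$ for $j\leq 90$, and $b_j=j+10$ for $j\geq 91$. Your conditions force $N\geq 92$, but the walk from $a_{92}=-82$ collects the $82$ points $a_{11},\dots,a_{92}$ before reaching $b_1$ and then balances at $b_{82}=82$, never having seen $a_1,\dots,a_{10}$; thus $\tau(a_{92})=b_{82}\neq b_{92}$. The paper instead chooses $N$ via the minimum of $x\mapsto |A\cap[b_1,x]|-|B\cap[b_1,x]|$ on $[b_1,a_1]$, which is exactly what is needed to control such trapped clusters of $a$-points.

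For part~(b), the obstruction you isolate is genuine, and neither of your two suggested fixes is carried out; as written the argument is incomplete. The paper avoids the difficulty by a different rearrangement. Rather than first pairing boundary mass and pushing it into the $N\times N$ block, it repairs \emph{crossings}: configurations $a_k<a_i<b_j<b_l$ with $\pi_{kj},\pi_{il}>0$ and $i,j\leq N$, shifting the common mass from $(k,j),(i,l)$ to $(i,j),(k,l)$. Since $a_i<b_j$ and $a_k<b_l$ hold by the very definition of a crossing, no forbidden entry is ever created; concavity together with monotonicity of $\psi$ shows that each repair weakly decreases the left side of~\eqref{ineq} with its doubled interior weights. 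A systematic sweep over all such crossings (the order matters and is imported from~\cite{MR15}) leaves a matrix with no crossing whose inner pair lies in $\{1,\dots,N\}^2$, and the unique such matrix compatible with the marginals is the indicator of~$\tau$. Your two-stage plan separates ``localise'' from ``optimise'', and it is precisely this separation that manufactures the forbidden-entry problem; the single crossing-repair sweep accomplishes both at once.
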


Note that the sum on the left hand side of \eqref{ineq} is over pairs of positive integers
$i,j$ with $i\wedge j \leq N$, counting the contribution of pairs with $i\vee j \leq N$ twice.
In the special case that $a_1<b_1$ we have $\tau(a_i)=b_i$ for all $i$, and hence $N\in\N$ can be chosen arbitrarily. Then our result becomes the following general result, which to the best of our knowledge is new, too.

\begin{corollary}
For every double-sided increasing sequence $(a_n \colon n \in \Z)$ that is unbounded from above and below, 
every stochastic matrix $\pi = (\pi_{i,j} \colon i,j\in{\mathbb N} )$, and every
concave function  $\psi\colon [0,\infty) \to [0,\infty)$ we have
$$\sum_{i=1}^{n}\sum_{j=1}^{\infty} 
+ \sum_{i=1}^{\infty}\sum_{j=1}^{n} \pi_{i,j} \psi(a_{i}-a_{-j}) 
\ge 2\sum_{i=1}^{n}\psi(a_{i}-a_{-i}).$$
\end{corollary}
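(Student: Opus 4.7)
The plan is to derive the corollary directly from Lemma~\ref{3rdconcavity}(b) by fitting its hypotheses; essentially no new analytic content is required. Given the double-sided strictly increasing sequence $(a_n \colon n \in \Z)$, I set $\tilde a_k := a_{-k}$ and $\tilde b_k := a_k$ for $k \geq 1$. By the strict monotonicity and unboundedness of $(a_n)$, we have $\tilde a_k \searrow -\infty$ and $\tilde b_k \nearrow \infty$, the two sequences are disjoint, and crucially $\tilde a_1 = a_{-1} < a_1 = \tilde b_1$. The paragraph immediately before the corollary already remarks that, under this last inequality, the stable allocation map is the diagonal $\tau(\tilde a_k) = \tilde b_k$ and that the integer $N$ appearing in the lemma may be chosen arbitrarily.

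Next I rewire the coupling matrix. In Lemma~\ref{3rdconcavity}(b) the weight $\pi_{i,j}$ is attached to the pair $(\tilde a_i, \tilde b_j)$, whereas in the corollary the weight $\pi_{i,j}$ is attached to the pair $(a_i, a_{-j}) = (\tilde b_i, \tilde a_j)$. I therefore apply the lemma to the transposed matrix $\tilde\pi_{i,j} := \pi_{j,i}$. Transposition swaps row- and column-sum conditions, so $\tilde\pi$ satisfies the marginal requirements of the lemma for every~$N$. The support condition $\tilde\pi_{i,j} = 0$ whenever $\tilde a_i > \tilde b_j$ is automatic here, since $\tilde a_i = a_{-i} < a_j = \tilde b_j$ for all $i,j\geq 1$.

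Finally I invoke the lemma with $N = n$. The right hand side immediately becomes $2\sum_{i=1}^{n}\psi(\tilde b_i - \tilde a_i) = 2\sum_{i=1}^{n}\psi(a_i - a_{-i})$, matching the right hand side of the corollary. On the left, substituting $\tilde\pi_{i,j} = \pi_{j,i}$ and then relabelling the dummy indices $i\leftrightarrow j$ inside each of the two double sums turns the lemma's left hand side into $\sum_{i=1}^{n}\sum_{j=1}^{\infty}\pi_{i,j}\,\psi(a_i - a_{-j}) + \sum_{i=1}^{\infty}\sum_{j=1}^{n}\pi_{i,j}\,\psi(a_i - a_{-j})$, which is the claimed bound.

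Since the whole argument is a substitution, no real analytic obstacle arises at this stage; the only thing to watch is careful index bookkeeping, together with reading ``stochastic matrix'' in the corollary as doubly stochastic (this interpretation is in any case forced upon us, because the lemma's marginal conditions are available for arbitrary~$N$, hence for every row and every column of $\pi$).
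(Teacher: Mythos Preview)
Your proposal is correct and follows exactly the route the paper indicates: the corollary is simply the special case $\tilde a_1<\tilde b_1$ of Lemma~\ref{3rdconcavity}(b), where the stable allocation is diagonal and $N$ is arbitrary. Your explicit transposition $\tilde\pi_{i,j}=\pi_{j,i}$ and the accompanying observation that ``stochastic'' must be read as \emph{doubly} stochastic are the bookkeeping details the paper leaves implicit; both are handled correctly.
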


\begin{proof}[Proof of Lemma~\ref{3rdconcavity}(a)]
Given $a\in A$ the set $A \cap [a,\infty)$ is finite, but the set
$B \cap [a,\infty)$ is infinite. Hence, on the one hand, there exists $b\in B$ such that
$|B \cap [a,b]|\geq |A \cap [a,\infty)| \geq |A \cap [a,b]|$, while on the other hand
$|B \cap [a,a]|=0<1=|A \cap [a,a]|$. This implies that there exists
$b'\in[a,b]$ with $|B \cap [a,b']|= |A \cap [a,b']|$, and hence $\tau$ is well-defined.
\medskip

We now show that there exists $N \in \N$ such that, for all $n\geq N$, we have $\tau(a_n)=b_n$.
If $a_1<b_1$ one can choose  $N=1$. Otherwise define an integer-valued function
$f\colon[b_1,\infty)\to\R$ by
$$ 
f(x)= \big| A\cap [b_1,x] \big| -\big| B\cap [b_1,x] \big|.
$$
Let $M\in\Z$ be the minimum of $f$ on $[b_1,a_1]$.
Note that $f(a_1)>M$ and on $[a_1,\infty)$ the function $f$ is decreasing to $-\infty$ by downward jumps of size one. Hence there exists  $n> 1$ with $f(b_n)=M-1$.  Clearly, for all $m\geq n$ we have 
$|A\cap[a_m,b_m]|=m=|B\cap[a_m,b_m]|$ while, for $j<m$, 
we have $|A\cap[a_m,b_j]|> j=|B\cap[a_m,b_j]|$. Hence, $\tau(a_m)=b_m$, as required.
 \end{proof}

\begin{proof}[Proof of Lemma~\ref{3rdconcavity}(b)] 
This is a variant of Lemma 5.3 in~\cite{MR15}. We say that four points
$a_k<a_i<b_j<b_l$ are crossed by $\pi$ if 
$\pi_{kj}>0$ and $\pi_{il}>0$. 
Such a crossing can be repaired by replacing the matrix $\pi$ by a new matrix $\pi'$ given by
$$\begin{aligned}
\pi'_{kj} & =  \pi_{kj}- (\pi_{kj} \wedge \pi_{il}), &
\pi'_{il}  & =  \pi_{il}- (\pi_{kj} \wedge \pi_{il}), \\
\pi'_{ij} & =  \pi_{ij} + (\pi_{kj} \wedge \pi_{il}), &
\pi'_{kl}  & =  \pi_{kl} + (\pi_{kj} \wedge \pi_{il}), \\
\end{aligned}$$
leaving all other entries untouched. If $\pi$ satisfies the conditions of $(b)$, then so does $\pi'$.
By concavity of the function $\psi$ we get
$$\psi(b_j-a_k)+\psi(b_l-a_i) \geq \psi(b_j-a_i)+\psi(b_l-a_k).$$
Hence the left hand side of \eqref{ineq} decreases when we replace $\pi$ by $\pi'$, and we observe that
$a_k<a_i<b_j<b_l$ are not crossed by $\pi'$. We say the crossing is repaired.

We systematically repair all crossings as in Section~5 of~\cite{MR15}, i.e.\ in the following order
\begin{itemize}
\item picking $b_j$ from $\{b_1,\ldots, b_n\}$ from left to right,
\item given $b_j$ picking $a_i<b_j$ from $\{a_i \colon 1\leq i \leq n, a_i<b_j\}$ from right to left,
\item given $a_i<b_j$ picking $a_k<a_i$ from $\{a_{i+1}, a_{i+2}, \ldots\}$ from right to left, 
\item given $a_k<a_i<b_j$ picking $b_l$ from $\{b_{j+1}, b_{j+2},\ldots\}$ from left to right.
\end{itemize}
Just as in Lemma~5.1 of~\cite{MR15} we see that this procedure is well-defined (taking limits in the last two steps) and leads to a matrix $\pi^*$, which satisfies
$$ \sum_{i=1}^{N}\sum_{j=1}^{\infty} 
+ \sum_{i=1}^{\infty}\sum_{j=1}^{N}\pi_{i,j} \psi(b_{j}-a_{i}) 
\ge \sum_{i=1}^{N}\sum_{j=1}^{\infty} 
+ \sum_{i=1}^{\infty}\sum_{j=1}^{N}\pi^*_{i,j} \psi(b_{j}-a_{i}) .$$
Moreover, $\pi^*$ crosses no four points $a_k<a_i<b_j<b_l$ with $a_i, b_j\in\mathcal E$ and hence,
as in Lemma~5.2 of~\cite{MR15}, we get that $\pi^*_{ij}=\mathbbm{1}\{\tau(a_i)=b_j\}$.
Plugging the entries of $\pi^*$ into the right hand side gives~\eqref{ineq}.
\end{proof}

We now use Lemma~\ref{3rdconcavity} to get a continuous inequality. Let $\mathcal E=[b_0,a_0]\subset \R$ be an excursion
and $M=\ell^\mu(\mathcal E)>0$. Given $n\in\N$ we pick $a_1> a_2> \ldots>a_n$ such that 
\smash{$\ell^\mu(a_{i}, a_{i-1})=\frac{M}n$} and
$b_1< b_2<\ldots < b_n$  such that $\ell^{\nu}(b_{j-1},b_{j})=\frac{M}n$, for $1\leq i,j \leq n$, and
also such that $a_n=b_0$ and $b_n=a_0$. \smallskip

Additionally, $a_i, b_i$, for 
$i > n$, are chosen in such a way that $a_i \searrow -\infty$, $b_i \nearrow \infty$, and
$$
\sup_{i=n}^\infty \big( b_{i+1}-b_i\big),   \quad\sup_{j=n}^\infty \big( a_j-a_{j+1}\big)\longrightarrow 0.
$$
Then, if $\tau_n \colon \{\ldots, a_3, a_2, a_1\} \to \{b_1, b_2, b_3, \ldots\}$ is the (discrete) stable allocation map, in Lemma~\ref{3rdconcavity} we may choose $N=n$.
Now suppose a forward-looking and balancing transport rule $\theta$ is given. We define
$$\pi_{i,j}= \frac{n}M\, \theta((a_{i},a_{i-1}], (b_{j-1},b_j])$$
and note that $\pi = (\pi_{i,j} \colon i,j\in{\mathbb N} )$ satisfies the conditions
of Lemma~\ref{3rdconcavity}$\,(b)$. Hence we have
\begin{equation}\label{E:discrete_ineq_conv}
\sum_{i=1}^{n}\sum_{j=1}^{\infty} 
+ \sum_{i=1}^{\infty} \sum_{j=1}^{n}\pi_{i,j} \psi(b_{j}-a_{i}) 
\ge 2\sum_{i=1}^{n}\psi(\tau_n(a_i)-a_{i}).
\end{equation}
Multiply both sides by $M/n$ and let $n$ go to infinity. We will argue that the right and left hand side
of~\eqref{E:discrete_ineq_conv}  converge to those of~\eqref{E:cost_pathwise}.
\smallskip

\emph{First} we show convergence of the left hand side of~\eqref{E:discrete_ineq_conv}. Given $\{a_i\}_{i\in \N}$ and $\{b_{j}\}_{j \in \N}$ as constructed above, let $g_{n}(a)=a_i$, if $a \in (a_i, a_{i-1}]$ 
and $h_n(b)=b_j$, if $b \in (b_{j-1}, b_j]$.

\begin{lemma}\label{L:g_h_convergences}
For $\ell^{\mu}$-almost every~$a$ we have $g_n(a) \to a$, and for 
$\ell^{\nu}$-almost every~$b$ we have $h_n(b) \to b$.
\end{lemma}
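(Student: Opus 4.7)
The plan is to prove $g_n(a)\to a$ for $\ell^\mu$-a.e.\ $a$; the statement for $h_n$ then follows by the symmetric argument with $\mu,\nu$ and the sequences $(a_i),(b_j)$ interchanged. The starting point is that $\ell^\mu$ and $\ell^\nu$ are atomless random measures on $\R$: each $\ell^x$ is nonatomic by the continuity of local times, and integration against the probability measure $\mu$ (resp.\ $\nu$) preserves this. I would then split the domain of $g_n$ according to whether $a$ lies outside or inside the excursion $\mathcal{E}=[b_0,a_0]$.

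If $a\in(-\infty,b_0)$, the unique index $i$ with $a\in(a_i,a_{i-1}]$ necessarily satisfies $i\ge n+1$, hence
$$|g_n(a)-a|\le a_{i-1}-a_i\le \sup_{j\ge n}(a_j-a_{j+1}),$$
and the right-hand side tends to $0$ by construction of the extended sequence. Convergence is in fact uniform on $(-\infty,b_0)$ and requires no exceptional set.

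Inside the excursion I would argue via two-sided points of increase. Call $a\in\mathcal{E}$ \emph{two-sided} for $\ell^\mu$ if $\ell^\mu(a-\eps,a)>0$ and $\ell^\mu(a,a+\eps)>0$ for every $\eps>0$. The set of non-two-sided points in $\mathcal{E}$ is countable, since each such point is an endpoint of one of the at most countably many maximal open sub-intervals of $\mathcal{E}$ on which $\ell^\mu$ vanishes; atomlessness of $\ell^\mu$ then gives that $\ell^\mu$-a.e.\ $a\in\mathcal{E}$ is two-sided. For such $a$ and $\eps>0$, pick $n$ so large that
$$\frac{M}{n}<\min\bigl\{\ell^\mu(a-\eps,a),\,\ell^\mu(a,a+\eps)\bigr\},$$
and let $i\in\{1,\dots,n\}$ be the unique index with $a\in(a_i,a_{i-1}]$. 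If $a_i\le a-\eps$ then $(a-\eps,a]\subset(a_i,a_{i-1}]$, forcing $\ell^\mu(a-\eps,a]\le M/n$, a contradiction; the symmetric argument rules out $a_{i-1}\ge a+\eps$. Therefore $|g_n(a)-a|<\eps$. The main conceptual step is precisely this reduction to two-sided points of increase of $\ell^\mu$; once it is in place, the quantitative bounds are immediate consequences of the construction.
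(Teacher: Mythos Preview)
Your approach is correct and essentially the same as the paper's: both reduce to the observation that for $\ell^\mu$-a.e.\ $a\in\mathcal E$ one has $\ell^\mu(a-\eps,a)>0$, whence the partition cell $(a_i,a_{i-1}]$ containing $a$ must satisfy $a_i>a-\eps$ once $M/n$ is small enough. Two minor points are worth noting. First, the set of non-two-sided points in $\mathcal E$ is not in general countable---the interiors of the maximal $\ell^\mu$-null intervals are themselves non-two-sided and uncountable---but that set is nonetheless $\ell^\mu$-null (interiors carry no mass, endpoints are countable and $\ell^\mu$ is atomless), which is all you actually use. Second, since $g_n(a)=a_i\le a$ always, only the \emph{left}-sided condition $\ell^\mu(a-\eps,a)>0$ is needed, so your bound on $a_{i-1}$ is superfluous; the paper's proof uses exactly this one-sided version and is correspondingly shorter.
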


\begin{proof}
It suffices to prove the first claim. The result is trivial if $a\leq b_0$.
Given $\varepsilon>0$, for $\ell^{\mu}$-almost every ~$a$ with
$b_0+\eps<a \leq a_0$ there exists $\eta>0$ such that
$\ell^{\mu}(a-\varepsilon, a) \ge \eta$. Hence, for all $n > M/ \eta$, 
there exists $a_i \in (a-\varepsilon, a)$ which implies $0 < a-g_n(a) < \varepsilon$.  
\end{proof}

Note that
$$\frac M n \sum_{i=1}^{n}\sum_{j=1}^{\infty} \pi_{i,j} \psi(b_{j}-a_{i}) 
= \iint_{\mathcal E \times \R}  \psi\big(h_n(b)-g_n(a)\big)\theta(da,db).$$
Now we compare the integrand with $\psi\big(b-a)$. Adding and subtracting $\psi(b-g_n(a))$ and then using the triangle inequality we get 
\begin{align}
|\psi(h_n(b)-g_n(a))-\psi(b-a)|& \le  |\psi(h_n(b)-g_n(a))-\psi(b-g_n(a))|+
|\psi(b-a)-\psi(b-g_n(a))| \notag \\ 
& \le  \psi(h_n(b)-b)+\psi(a-g_n(a)), \notag
\end{align}
where in the second inequality we used the sub-additivity of $\psi$ as follows, 
\begin{align}
|\psi(h_n(b)-g_n(a))-\psi(b-g_n(a))| &= \psi(h_n(b)-g_n(a))-\psi(b-g_n(a))\notag\\
&=
\psi(h_n(b)-b+b-g_n(a))-\psi(b-g_n(a)) \le \psi(h_n(b)-b). \notag
\end{align}
Using this estimate on the integrand, we get
\begin{align*}
0 \leq \iint_{\mathcal E \times \R}  & \psi\big(h_n(b)-g_n(a)\big)-\psi(b-a) \, \theta(da, db)  \leq \iint_{\mathcal E \times \R} \psi(h_n(b)-b)+\psi(a-g_n(a)) 
\, \theta(da, db).
\end{align*}
The integrand on the right is bounded and converges to zero, $\theta$-almost everywhere, by Lemma~\ref{L:g_h_convergences}.  Hence the left hand side 
of~\eqref{E:discrete_ineq_conv}, multiplied by $\frac M n$, converges to the required limit, 
$$\iint_{\mathcal E \times \R}  +   \iint_{\R \times \mathcal E} \psi(b-a)\, \theta(da,db).$$   

\emph{Second} we show convergence of the right hand side of~\eqref{E:discrete_ineq_conv}. The key to this is the following lemma.

\begin{lemma}\label{L:tau_n convergence}
For $\ell^{\mu}$-almost every $a \in \mathcal{E}$, we have $\lim_{n\to \infty} \tau_{n}(g_{n}(a))=\tau_*(a)$.
\end{lemma}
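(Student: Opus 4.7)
\medskip\noindent\emph{Proof plan.}
The plan is to show $\tau_n(g_n(a)) \to c := \tau_*(a)$ in three steps: confining $b_{j_n} := \tau_n(g_n(a))$ to the bounded set $\mathcal E$, deriving a lower bound on subsequential limits from the balance condition of $\tau_n$, and establishing a matching upper bound by exhibiting a time just past $c$ at which the imbalance is strictly negative.

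Fix $a \in \mathcal{E}$ with $a > b_0$ and $g_n(a) \to a$; the latter holds for $\ell^\mu$-a.e.\ $a$ by Lemma~\ref{L:g_h_convergences}. By the endpoint conventions $a_n = b_0$, $b_n = a_0$ and the equi-spacing of width $M/n$, $\mathcal E$ contains exactly $n$ of the $a_i$'s and $n$ of the $b_j$'s. The stable allocation $\tau_n$ is the unique non-crossing (nested) matching between these points (a property implicit in the crossing-repair argument of Lemma~\ref{3rdconcavity}$(b)$), so it restricts to a bijection within $\mathcal E$, placing $b_{j_n}$ in $\mathcal E$. The defining equality of $\tau_n$ together with the equi-spacing yields
\[
\bigl|\ell^\mu[g_n(a), b_{j_n}] - \ell^\nu[g_n(a), b_{j_n}]\bigr| \le \tfrac{2M}{n}.
\]
Along any subsequence $b_{j_n} \to y$, continuity of local times forces $\ell^\mu[a,y] = \ell^\nu[a,y]$, so $y \ge c$ by minimality of $c$.

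For the matching upper bound $\limsup_n b_{j_n} \le c$, introduce the imbalance $D_n(t) := |\{i : a_i \in [g_n(a),t]\}| - |\{j : b_j \in [g_n(a),t]\}|$, which starts at $D_n(g_n(a)) = 1$, changes by $\pm 1$ at each $a_i, b_j$, and has $b_{j_n}$ as its first zero. The same equi-spacing reasoning gives $D_n(t) = (n/M)\,G(t) + O(1)$ uniformly on $\mathcal E$, where $G(t) := \ell^\mu[a,t] - \ell^\nu[a,t]$. If for each $\eta > 0$ one can produce $t_\eta \in (c, c+\eta)$ with $G(t_\eta) < 0$, then $D_n(t_\eta) < 0$ for large $n$; since $D_n$ is integer-valued with unit jumps, this forces $b_{j_n} < t_\eta$, and letting $\eta \to 0$ concludes.

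The main obstacle is verifying this \emph{crossing} property of $G$ at $c$ for $\ell^\mu$-a.e.\ $a$. I would deduce it from the orthogonality $\mu \perp \nu$ via a Hahn decomposition $\R = M^* \sqcup N^*$ with $\mu(M^*) = \mu(\R)$ and $\nu(N^*) = \nu(\R)$: then $\ell^\mu$ (resp.\ $\ell^\nu$) grows only at times when $B \in M^*$ (resp.\ $B \in N^*$), and these are disjoint. For $\ell^\mu$-a.e.\ $a$ the Brownian motion sits in $M^*$ near time $a$, so $\ell^\mu$ increases while $\ell^\nu$ is locally constant past $a$, giving $G > 0$ on $(a,c)$. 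By the balance property of $\tau_*$ pushing $\ell^\mu$ forward to $\ell^\nu$, for $\ell^\mu$-a.e.\ $a$ the point $c = \tau_*(a)$ lies in the support of $\ell^\nu$ with $B_c \in N^*$; hence $\ell^\nu$ grows while $\ell^\mu$ stays locally constant just past $c$, so $G$ strictly decreases through zero at $c$, producing the required $t_\eta$ and completing the argument.
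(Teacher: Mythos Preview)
Your overall architecture is sound, and the upper bound argument via the crossing of $G$ at $c=\tau_*(a)$ is essentially correct once the crossing property is properly justified. However, there is a genuine gap in the lower bound. From $|\ell^\mu[g_n(a),b_{j_n}]-\ell^\nu[g_n(a),b_{j_n}]|\le 2M/n$ you correctly deduce that any subsequential limit $y$ satisfies $G(y)=0$, but the conclusion ``so $y\ge c$ by minimality of $c$'' is unjustified: $G(a)=0$ as well, and nothing in your argument excludes $y=a$. Concretely, if the smallest $b_j$ exceeding $g_n(a)=a_i$ lies in the interval $(a_i,a_{i-1})$, then $\tau_n(a_i)$ equals that $b_j$, which may be arbitrarily close to $a$. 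Your uniform estimate $D_n(t)=(n/M)G(t)+O(1)$ does not help near $t=a$, since $G$ is small there and the $O(1)$ error can dominate. The paper handles exactly this difficulty: its Steps~2--4 use the Besicovitch differentiation theorem and a pigeonhole/covering argument to show that for $\ell^\mu$-a.e.\ $a$ one can find (arbitrarily large) $n$ for which no $b_j$ lies in a buffer zone $[a_i,a_{i-7}]$, so that $D_n$ stays positive all the way from $g_n(a)$ past $a$. This is the missing idea in your proof.

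A secondary issue concerns your justification of the crossing property. The assertions that ``$\ell^\nu$ is locally constant past $a$'' and ``$\ell^\mu$ stays locally constant just past $c$'' are false in general: orthogonality $\mu\perp\nu$ does \emph{not} force the time supports of $\ell^\mu$ and $\ell^\nu$ to be disjoint (take $\mu=\delta_0$ and $\nu$ uniform on $[-1,1]$). What is true, and what the paper uses, is the differentiation theorem: for $\ell^\mu$-a.e.\ $a$ one has $\ell^\nu[a,a+x]=o(\ell^\mu[a,a+x])$ as $x\downarrow 0$, and symmetrically at $c=\tau_*(a)$ (which is $\ell^\nu$-typical since $\tau_*$ pushes $\ell^\mu$ onto $\ell^\nu$). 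This gives $G>0$ on $(a,c)$ and $G<0$ just past $c$, which is all you need; but the route via ``locally constant'' does not work.
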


\begin{proof}
We define
$$f\colon [a, \tau_*(a)] \to [0,\infty), f(x)= \ell^\mu[a,x]-\ell^\nu[a,x],$$
and
$$f_n\colon [a, \tau_*(a)] \to \R, f_n(x)= \sfrac{M}n \big( |\{ i \colon a_i\in[g_n(a),x]\}| - |\{ j \colon b_j\in[g_n(a),x]\}| \big).$$
The proof is organised into five steps.
\pagebreak[3]

{\bf Step 1:} $|f(x)-f_n(x)| \leq \frac{4M}{n}$.

\begin{proof} Denote $k_1=|\{ i \colon a_i\in[g_n(a),x]\}|$ and $k_2=|\{ j \colon b_j\in[g_n(a),x]\}|$, then
$f_n(x)= \frac{M}n (k_1-k_2)$ and
\begin{align*}
\ell^\mu[a,x]& = (k_1-1) \frac{M}n+ \ell^\mu[g_n(x),x]-\ell^\mu[g_n(a),a],
\end{align*}
and hence
$|\ell^\mu[a,x]- k_1 \frac{M}n| \leq \frac{2M}{n}.$
Similarly,
$|\ell^\nu[a,x]- k_2 \frac{M}n| \leq \frac{2M}{n},$
which implies the statement.
\end{proof}

Recall that $\tau_*(a)=\inf\{x>a \colon f(x)=0\}$ and
$$\tau_n(g_{n}(a))=\inf\{x>g_n(a) \colon f_n(x)=0\}.$$

{\bf Step 2:}
For $\ell^{\mu}$-almost every $a \in \mathcal{E}$, there exists $\eps_0>0$ such that, 
$$f(a+x) \geq \sfrac56 \ell^\mu[a,a+x]  \mbox{ for all } x\in(0,\eps_0), \mbox{ and }$$
$$f(\tau_*(a)-x) \geq \sfrac56 \ell^\nu[\tau_*(a)-x,\tau_*(a)]  \mbox{ for all } x\in(0,\eps_0).$$

\begin{proof}
For $\ell^{\mu}$-almost every $a \in \mathcal{E}$, there exists $\eps_0>0$ such that, 
$$\ell^\nu[a,a+x] \leq \sfrac16 \ell^\mu[a,a+x]  \mbox{ for all } x\in(0,\eps_0).$$
see, e.g., Section~1.6, Theorem~3 in \cite{EG}. 
This implies the first property, and the second is analogous.\end{proof}

{\bf Step 3:} 
Let 
$G_n := \{ a_i \colon i\in\{1,\ldots,n\} ,  \not\!\!\exists b_j \in [a_i, a_{i-7}] \}.$
Then, 
for $\ell^\mu$-almost every $a\in \mathcal E$, there exists arbitrarily large $n$ with $g_n(a)\in G_n$.

\begin{proof}
Let $\eta>0$. 
For $\ell^\mu$-almost every $x\in \mathcal E$, 
there exists $\delta_0>0$ such that $[x-\delta_0, x+\delta_0]\subset\mathcal E$ and
$$\ell^\nu[x-\delta, x+\delta] \leq \eta \ell^\mu[x-\delta, x+\delta], \quad
\mbox{ for all }0<\delta<\delta_0,$$
see, e.g., Section~1.6, Theorem~3 in \cite{EG}. 
Supposing that, for some $k\in\N$,
$$ \sfrac{(k+2)M}{n} \leq \ell^\mu[x-\delta, x+\delta] \leq \sfrac{(k+3)M}{n}$$
we have 
$$\big|\big\{ a_i \colon i\in\{1,\ldots,n\} ,  a_i\in[x-\delta, x+\delta]  \big\} \big| \geq k+1$$
and
$$\big|\big\{ b_j \colon j\in\{1,\ldots,n\} ,  b_j\in[x-\delta, x+\delta]  \big\} \big| \leq 1+ \eta(k+3).$$
By the pigeonhole principle therefore
$$\big| \big\{ a_i\in[x-\delta, x+\delta] \colon  a_i \not\in G_n   \big\} \big| 
\leq 7 ( 1+ \eta(k+3)). $$
Hence, given $\eps>0$, we can find $\eta$, and hence $\delta_0(x)>0$, such that
$$\ell^\mu\{ a\in[x-\delta, x+\delta] \colon g_n(a) \not\in G_n\} <\sfrac\eps{4M}\, \ell^\mu[x-\delta, x+\delta],\quad
\mbox{ for all }0<\delta<\delta_0(x) \mbox{ and } n \geq n_0(\delta,x).$$  
Let $$\mathcal E(\delta):=\{ x\in \mathcal E \colon \delta_0(x)>\delta\}.$$ \ \\[-2mm]
We now fix a global $\delta>0$ with $\ell^\mu(\mathcal E \setminus \mathcal E(\delta))<\frac\eps2$.
We then cover $\mathcal E(\delta)$ by finitely many intervals $[x_1-\delta, x_1+\delta], \ldots,
[x_m-\delta, x_m+\delta]$ with $x_i\in\mathcal E(\delta)$ such that each  of the centres $x_1,\ldots, x_m$ is only contained in one of the intervals. Let $n_0:=\max_{i=1}^m n_0(\delta,x_i)$. Then, for all $n\geq n_0$, we have
\begin{align*}
\ell^\mu\{ a\in \mathcal E \colon g_n(a) \not\in G_n \} &
\leq \sum_{i=1}^m \ell^\mu\{ a\in[x_i-\delta, x_i+\delta] \colon g_n(a) \not\in G_n\} + 
\ell^\mu(\mathcal E \setminus \mathcal E(\delta))\\
& \leq \frac\eps{4M}\, \sum_{i=1}^m \ell^\mu[x_i-\delta, x_i+\delta] + \frac\eps2
 \leq \frac\eps{4M} \,  2\ell^\mu(\mathcal E) + \frac\eps2 = \eps.
\end{align*}
The result follows as $\eps>0$ was arbitrary.
\end{proof}

{\bf Step 4:} Using Step~3, for $\ell^\mu$-almost every $a$,  we can choose $n$ 
such that $g_n(a)\in G_n$ and
$$\sfrac{5M}{n} \leq \min\{f(y) \colon y\in[a+\eps,\tau_*(a)-\eps]\}.$$

Recall that $g_{n}(a)=a_i$, if $a \in (a_i, a_{i-1}]$. In this case we also write $g^{\ssup k}_{n}(a)=a_{i-k}$.
\smallskip

By  Step~1 and choice of $n$ we have
$$f_n(x) \geq f(x)- \sfrac{4M}{n}  \geq \sfrac{M}{n}  \mbox{ for all } x\in \big[a+\eps, \tau_*(a)-\eps \big] .$$
By Step~2, using again Step~1, 
$$f_n(x) \geq f(x)- \sfrac{4M}{n}  \geq \sfrac56 \ell^\mu[a, g^{\ssup 7}_n(a)]- \sfrac{4M}{n}
 \geq \sfrac{M}{n} 
 \mbox{ for all } x\in \big[ g^{\ssup 7}_n(a) ,a+\eps \big] ,$$
and the fact that $g_n(a)\in G_n$ implies that
$$f_n(x) \geq \sfrac{M}{n}  \mbox{ for all } x\in \big[g_n(a), g^{\ssup 7}_n(a)\big].$$
We thus obtain that
$$\tau_n(g_{n}(a))=\inf\{x>g_n(a) \colon f_n(x)=0\} \geq \tau_*(a)-\eps.$$

{\bf Step 5:} For $\ell^\mu$-almost every $a$, we have, for sufficiently large~$n$,
$$\tau_n(g_{n}(a))\leq \tau_*(a)+\eps.$$

\begin{proof}
Recall that $h_{n}(b)=b_j$, if $b \in (b_{j-1}, b_{j}]$. In this case we also write $h^{\ssup k}_{n}(b)=b_{j+k}$.
We note from Step~1 that \smash{$f_n(\tau_*(a)) \leq \frac{4M}{n}$}. Let 
$G'_n := \{ b_j \colon j\in\{1,\ldots,n\} ,  \not\!\!\exists a_i \in [b_j, b_{j+5}] \}.$
Then, as in Step~3, for $\ell^\mu$-almost every $a$, there exists arbitrarily large $n$ with 
$h_n(\tau_*(a))\in G'_n$. We infer that
$\tau_n(g_{n}(a))\leq h_n^{\ssup 5}(\tau_*(a))
\leq \tau_*(a)+\eps,$
if $n$ is large enough.
\end{proof}
The result follows by combining Steps~4 and~5, as $\eps>0$ was arbitrarily small.
\end{proof}

To conclude we note that, using Lemmas~\ref{L:g_h_convergences} and~\ref{L:tau_n convergence},
\begin{align*}
\lim_{n\to\infty} \frac{M}{n} \sum_{i=1}^{n}\psi(\tau_n(a_i)-a_{i}) &
=\lim_{n\to\infty} \int_{\mathcal E} \psi(\tau_n(g_n(a))-g_n(a)) \, \ell^\mu(da)\\
& = \int_{\mathcal E} \lim_{n\to\infty} \psi(\tau_n(g_n(a))-g_n(a)) \, \ell^\mu(da)
= \int \psi(\tau_*(a)-a) \, \ell^\mu(da),
\end{align*}
by bounded convergence.

\section{Ergodicity}\label{S:ergodicity}

Denote by $\p^{\ssup \mu}=\int \p_x\, \mu(dx)$ the law of two-sided Brownian motion 
with $\omega_0\sim \mu$ or, in other words, the push-forward of $\prob_\mu$ under  
$(B_t \colon t\in \R)$. This is a measure on the path space $(\Omega, \mathscr{A})$ introduced 
at the beginning of Section~2.
For $r\in\R$ define $S^{r}$ to be the generalized inverse of the local time  $\ell^{\mu}$, that is 
\begin{equation}
S^{r}:=\left\{
                \begin{array}{ll}
                  \sup\{t \ge 0: \ell^{\mu}[0,t]= r\}, \qquad & \text{if} \quad r\ge0, \\
                  \sup\{t < 0: \ell^{\mu}[t,0]= -r\}, \qquad  & \text{if} \quad r < 0.\\
                \end{array}
              \right.
\end{equation}
\ \\[-1mm]
Then $\p^{\ssup \mu}$ is invariant under the shifts $S^{r}\colon (\omega_s) \mapsto 
(\omega_{S^r+s})$, for every $r \in \R$,  by Theorem~3.4 in~\cite{LMT14}. In order to show that  the family of shifts $(S^{r})$ is ergodic, we need to show that any invariant set~$A$ is trivial,  i.e. $\p^{\ssup \mu}(A) \in \{0,1\}$. We follow a classical approximation approach. 
By~\cite[Appendix A.3]{Dur} there exist sets 
$A_r \in \sigma(\omega_s \colon s\in [S^{-r},S^{r}])$, $r>0$, such that 
$\p^{\ssup \mu}(A \Delta A_r) \to 0$ as $r \rightarrow \infty,$
where $\Delta$ denotes the symmetric difference of the two sets. 
As $A$ is invariant under the shift $S^{2r}$ we~get
\begin{align}
\p^{\ssup \mu}(A\Delta S^{2r}A_r)=\p^{\ssup \mu}(S^{2r}A \Delta S^{2r}A_r)=\p^{\ssup \mu}(S^{2r}(A\Delta A_r))=\p^{\ssup \mu}(A\Delta A_r) \to 0. \notag 
\end{align}
Hence there exists $r_n\nearrow\infty$ such that
$$\sum_{n=1}^\infty \p^{\ssup \mu}(A\Delta S^{2r_n}A_{r_n})<\infty,$$
and 
$A\setminus \bigcap_{s>0} \bigcup_{r_n>s} S^{2r_n}A_{r_n} $ and
$\bigcap_{s>0} \bigcup_{r_n>s} S^{2r_n}A_{r_n} \setminus A$ are $\p^{\ssup \mu}$-nullsets.
This implies
$$\p^{\ssup \mu}(A)= \p^{\ssup \mu}\Big( \bigcap_{s>0} \bigcup_{r_n>s} S^{2r_n}A_{r_n}\Big)\in\{0,1\},$$
using that the latter event is a tail event and hence trivial,  see e.g.\  \cite[Theorem 2.9]{MP10}.
\smallskip

\begin{lemma}\label{L:ergodic}
Let $T \ge 0$ be an unbiased shift and $\theta$ be the associated transport rule.
Furthermore let $\psi\colon [0, \infty) \to [0,\infty)$ be concave with $\psi(0)=0$. 
Let $S_{\mu+\nu}^{r}$ 
be  the generalized inverse of the local time~$\ell^{\mu+\nu}$.
Then, $\p^{\ssup{\mu+\nu}}$-almost surely,
\[
\lim_{r \to \infty}\frac{1}{r} \iint_{0}^{S_{\mu+\nu}^{r}}\psi(t-s)\, \theta(ds,dt) = 
\lim_{r \to \infty}\frac{1}{r}\iint_{S_{\mu+\nu}^{-r}}^{0}\psi(t-s)\, \theta(ds,dt)
= \sfrac12\, \E_{\mu}^{}\psi(T),
\]
and, 
\[ 
\lim_{r \to \infty}\frac{1}{r} \int_{0}^{S_{\mu+\nu}^{r}} \int \psi(t-s)\, \theta(ds, dt)
=\lim_{r \to \infty}\frac{1}{r} \int_{S_{\mu+\nu}^{-r}}^{0} \int \psi(t-s)\, \theta(ds,dt)
= \sfrac12\,  \E_{\mu}^{}\psi(T).
\]
\end{lemma}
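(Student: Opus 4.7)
The plan is to apply Birkhoff's pointwise ergodic theorem to the flow $(S^u_{\mu+\nu})$ on the probability space $(\Omega, \tilde{\p})$, where $\tilde{\p} := \tfrac12\, \p^{\ssup{\mu+\nu}}$ is normalised since $\mu+\nu$ has total mass $2$. The analogue of Theorem~3.4 of~\cite{LMT14} applied with $\ell^{\mu+\nu}$ in place of $\ell^{\mu}$ shows that each $S^u_{\mu+\nu}$ preserves $\tilde{\p}$, and the ergodicity argument of the preceding paragraphs carries over verbatim, with $\ell^{\mu+\nu}$ replacing $\ell^\mu$. Set $Y(\omega) := \int_0^{S^1_{\mu+\nu}} \int \psi(t-s)\, \theta(ds, dt)$; then additivity of the integral together with the $S^1_{\mu+\nu}$-invariance of the integrand gives $\int_0^{S^n_{\mu+\nu}} \int \psi(t-s)\, \theta(ds, dt) = \sum_{k=0}^{n-1} Y \circ S^k_{\mu+\nu}$, and Birkhoff's theorem delivers $\tilde{\p}$-almost sure convergence of the $\tfrac1n$-average to $\int Y\, d\tilde{\p}$. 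The interpolation from integer $n$ to real $r$ is by monotonicity in $r$, since $\psi \geq 0$.

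To identify $\int Y\, d\tilde{\p}$, I would change variables $s = S^u_{\mu+\nu}$. Mutual singularity of $\ell^\mu$ and $\ell^\nu$ (forced by $\mu \perp \nu$) gives $\tfrac{d\ell^\mu}{d\ell^{\mu+\nu}} = \mathbbm{1}_A$ with $A(\omega) := \supp \ell^\mu_\omega$, so writing $g(\omega) := \int \psi(t)\, \theta_\omega(0, dt)$ and using translation invariance of $\theta$ together with shift-equivariance of $A$, one has $Y(\omega) = \int_0^1 g(S^u_{\mu+\nu}\omega)\, \mathbbm{1}_{\{0\in A(S^u_{\mu+\nu}\omega)\}}\, du$. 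Taking $\p^{\ssup{\mu+\nu}}$-expectations and exploiting flow invariance of $\p^{\ssup{\mu+\nu}}$ collapses this to $\int g\cdot \mathbbm{1}_{\{0\in A\}}\, d\p^{\ssup{\mu+\nu}}$; the splitting $\p^{\ssup{\mu+\nu}} = \p^{\ssup\mu} + \p^{\ssup\nu}$ finishes the computation, because under $\p^{\ssup\mu}$ we have $B_0 \in \supp\mu$ almost surely (so $0 \in A$), while orthogonality forces $B_0 \in (\supp\mu)^c$ almost surely under $\p^{\ssup\nu}$ (so the indicator vanishes). Hence $\int Y\, d\p^{\ssup{\mu+\nu}} = \int g\, d\p^{\ssup\mu} = \E_\mu\psi(T)$, the last equality being~\eqref{pmu}, which gives $\int Y\, d\tilde{\p} = \tfrac12\, \E_\mu\psi(T)$ and proves the second display for $r>0$. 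The case $r<0$ is identical using $S^{-u}_{\mu+\nu}$.

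The first display differs from the second by the forward-escape $E_r := \iint_{[0,S^r_{\mu+\nu}]\times(S^r_{\mu+\nu},\infty)} \psi(t-s)\, \theta(ds,dt)$, so it remains to show $E_r/r \to 0$. Using subadditivity of $\psi$ (immediate from concavity with $\psi(0)=0$), I would split pairs $(s,t)$ according to whether the displacement $t-s$ exceeds a cutoff $L$ or the source $s$ lies in the real-time boundary window $[S^r_{\mu+\nu}-L,\, S^r_{\mu+\nu}]$. The tail contribution is dominated by $\int_0^{S^r_{\mu+\nu}} g_L(s\omega)\,\ell^\mu(ds)$ with $g_L(\omega) := \int_L^\infty \psi(u)\,\theta_\omega(0,du)$, and the Birkhoff argument above identifies its $\tfrac1r$-limit as $\tfrac12\, \E_\mu[\psi(T)\mathbbm{1}_{\{T>L\}}]$, arbitrarily small for large $L$. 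The boundary contribution I would handle by a companion Birkhoff-type estimate. The hardest part will be precisely this last step, because the boundary window is specified in real time whereas Birkhoff is clocked by local time $\ell^{\mu+\nu}$, so a separate pointwise (rather than merely $L^1$) bound on the $\ell^\mu$-mass in a shrinking real-time window at the endpoint is required.
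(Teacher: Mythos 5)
Your first half — Birkhoff applied to the flow $(S^u_{\mu+\nu})$ on the normalised measure $\tilde\p = \tfrac12\p^{\ssup{\mu+\nu}}$, with the change of variables $s=S^u_{\mu+\nu}$, the indicator $\mathbbm 1_{\{0\in\supp\ell^\mu\}}$ coming from mutual singularity, and the split $\p^{\ssup{\mu+\nu}}=\p^{\ssup\mu}+\p^{\ssup\nu}$ — is essentially the paper's argument and is sound. But that chain computes the \emph{source-restricted} integral $\iint_{[0,S^r_{\mu+\nu}]\times\R}\psi(t-s)\,\theta(ds,dt)$, i.e.\ the \emph{first} display of the lemma, not the second. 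The second display, $\int_0^{S^r_{\mu+\nu}}\int\psi(t-s)\,\theta(ds,dt)$, is \emph{target}-restricted (the outer integral is over $t$): in the paper's proof it is rewritten via the ergodic theorem as $\tfrac12\E^{\ssup\nu}\int\psi(-s)\,\theta(ds,0)$, which is then identified as $\tfrac12\E_\mu\psi(T)$ by the generalized Campbell formula together with the balancing property. This Campbell-plus-balancing step is the actual new content of the lemma and is absent from your proposal.

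Your attempt to deduce the target-restricted limit from the source-restricted one via an escape term $E_r$ has two concrete gaps. First, the difference between $\iint_{[0,S^r]\times\R}$ and $\iint_{\R\times[0,S^r]}$ is not $E_r$ alone; since $\theta$ is forward-looking it equals $E_r - C_r$, where $C_r := \iint_{(-\infty,0)\times[0,S^r]}\psi(t-s)\,\theta(ds,dt)$ is the entry term at the left endpoint, and $C_r$ is not $O(1)$ in general when $\psi$ is unbounded. Second, even for $E_r$ alone, after the cutoff at displacement $L$ you are left, as you acknowledge, needing an almost sure (not merely $L^1$) estimate that $\ell^\mu$ of a fixed real-time window $[S^r_{\mu+\nu}-L,\,S^r_{\mu+\nu}]$ is $o(r)$. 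That is not a routine consequence of Birkhoff because the window is prescribed in real time while the ergodic clock is $\ell^{\mu+\nu}$. The paper circumvents all of this precisely by treating the target-restricted integral directly: rewriting it with the target disintegration, applying the ergodic theorem in local time, and then converting the resulting Palm expectation with the Campbell formula and the balancing property $\int\theta_\omega(s,dt)\,\ell^\mu_\omega(ds)=\ell^\nu_\omega(dt)$. You should follow that route rather than a boundary-term reduction.
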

\begin{proof}
By the ergodic theorem, $\p^{\ssup{\mu+\nu}}$-almost surely,
\begin{align}
\lim_{r \to \infty}\frac{1}{r} \iint_{0}^{S_{\mu+\nu}^{r}}\psi(t-s)\, \theta(ds,dt) & =
\lim_{r \to \infty}\frac{1}{r}\int_{0}^{S_{\mu+\nu}^{r}} \bigg(\int\psi(t-s)\, \theta(s,dt) \bigg)\, \ell^{\mu+\nu}(ds)\notag\\ 
& = \lim_{r \to \infty}\frac{1}{r}\int_{0}^{{r}} \bigg( \int \psi(t-S_{\mu+\nu}^s)\, \theta(S_{\mu+\nu}^s,dt) \bigg) \, ds\notag\\
& = \sfrac12\, \E^{\ssup{\mu+\nu}}\int \psi(t) \, \theta(0,dt)=\sfrac12\, \E_{\mu}^{}\psi(T). \notag
\end{align}
Similarly,  we obtain $\p^{\ssup{\mu+\nu}}$-almost surely,
\begin{align}
\lim_{r \to \infty}\frac{1}{r}\int_{0}^{S_{\mu+\nu}^{r}} \int \psi(t-s)\,\theta(ds,dt)
& =\sfrac12\, \E^{\ssup{\mu+\nu}}\int \psi(-s) \theta(ds,0) \notag\\
& =\sfrac12\, \E^{\ssup \nu}\int \psi(-s) \, \theta(ds,0).
\notag
\end{align}
Using the generalized Campbell formula, see \cite[(2.4)]{LMT14}, we get 
\begin{align*}
\E^{\ssup \nu}\int \psi(-s) \, \theta(ds,0)
& = \E^{\ssup \nu}\int  \bigg( \int \psi(-s) \1\{s+r\in[0,1]\}\, \theta_\omega(ds,0) \bigg) \, dr \\
& = \E \int  \bigg( \int \psi(-s) \1\{s+r\in[0,1]\}\, \theta_{r\omega}(ds,0) \bigg) \, \ell^\nu(dr)\\
& = \E \iint \1\{t\in[0,1]\} \psi(r-t) \,  \theta_\omega(dt, r) \ell^\nu(dr),
\end{align*}
where in the last equation we used the shift-invariance of $\theta$. Using the balancing property 
first and then the generalized Campbell formula again this equals
\begin{align*}
\E \iint \1\{t\in[0,1]\} \psi(r-t) \,  \theta_\omega(t, dr) \ell^\mu(dt)
& = \E^{\ssup \mu}\int \psi(t) \, \theta(0,dt)= \E_{\mu}^{}\psi(T).
\end{align*}
The claims about backward time  follow in the same manner.
\end{proof}

\begin{proof}[Proof of Theorem~\ref{T:Brownian_optimality}]
Define for $u>0$  the stopping times
\begin{align}
\rho(u)&=\inf\{t \ge 0: \ell^{\mu}([0,t])-\ell^{\nu}([0,t])=-u\},\notag \\
\sigma(u)&=\sup\{t \le 0: -\ell^{\mu}([t,0])+\ell^{\nu}([t,0])=-u\}.\notag
\end{align}
We have
$\rho(u)\nearrow \infty$ and $\sigma(u)\searrow -\infty$, as $u \to \infty$, and, for all $u>0$, the interval $[\sigma(u),\rho(u)]$ forms an excursion.  
Hence, by Proposition~\ref{L:pathwise},
\begin{align}
 \int_{\sigma(u)}^{\rho(u)}\int \psi(t-s) \, \theta(ds,dt)
+ \int \int_{\sigma(u)}^{\rho(u)}\psi(t-s)\, \theta(ds,dt)
\geq 2 \int_{\sigma(u)}^{\rho(u)}\psi(\tau_{*}(s)-s) \, \ell^{\mu}(ds). \notag
\end{align}
Applying Lemma~\ref{L:ergodic} the left hand side is asymptotically equal to 
$\frac12\, \ell^{\mu+\nu}([\sigma(u),\rho(u)])\E_{\mu}\psi(T),$
and the right hand side to $\frac12\, \ell^{\mu+\nu}([\sigma(u),\rho(u)])\E_{\mu}\psi(T_{*})$, which concludes the proof.
\end{proof}
\bigskip

{\bf Acknowledgements:} 
This paper is part of the second author's PhD project. We would like to thank the referee for their careful reading of the manuscript.
\\

\bigskip

\bigskip

Peter M\"orters and Istv\'an Redl\\
Department of Mathematical Sciences\\
University of Bath\\
Bath BA2 7AY\\
United Kingdom\\


\begin{thebibliography}{88}

\bibitem[1]{BH13}
M.~Beiglb\"ock, A.~M.~G. Cox, and M.~Huesmann.
\newblock Optimal transport and {S}korokhod embedding.
\newblock {\em Inventiones Math.}, to appear, 2016.

\bibitem[2]{BL92}
J.~Bertoin and Y.~Le~Jan.
\newblock Representation of measures by balayage from a regular recurrent
  point.
\newblock {\em Ann. Probab.}, 20:538--548, 1992.

\bibitem[3]{CH04}
A.~M.~G. Cox and D.~G. Hobson.
\newblock An optimal {S}korokhod embedding for diffusions.
\newblock {\em Stochastic Process. Appl.}, 111:17--39, 2004.

\bibitem[4]{CH07}
A.~M.~G. Cox and D.~G. Hobson.
\newblock A unifying class of {S}korokhod embeddings: connecting the
  {A}z\'ema-{Y}or and {V}allois embeddings.
\newblock {\em Bernoulli}, 13:114--130, 2007.

\bibitem[5]{Dur}
R.~Durrett.
\newblock {\em Probability: theory and examples}.
\newblock Cambridge Series in Statistical and Probabilistic Mathematics.
  Cambridge University Press, Cambridge, fourth edition, 2010.

\bibitem[6]{EG}
L.~C. Evans and R.~F. Gariepy.
\newblock {\em Measure theory and fine properties of functions}.
\newblock Studies in Advanced Mathematics. CRC Press, Boca Raton, FL, 1992.

\bibitem[7]{HL01}
A.~E. Holroyd and T.~M. Liggett.
\newblock How to find an extra head: optimal random shifts of {B}ernoulli and
  {P}oisson random fields.
\newblock {\em Ann. Probab.}, 29:1405--1425, 2001.

\bibitem[8]{HPPS09}
A.~E. Holroyd, R.~Pemantle, Y.~Peres, and O.~Schramm.
\newblock Poisson matching.
\newblock {\em Ann. Inst. Henri Poincar\'e Probab. Stat.}, 45:266--287, 2009.

\bibitem[9]{HP05}
A.~E. Holroyd and Y.~Peres.
\newblock Extra heads and invariant allocations.
\newblock {\em Ann. Probab.}, 33:31--52, 2005.

\bibitem[10]{LMT14}
G.~Last, P.~M{\"o}rters, and H.~Thorisson.
\newblock Unbiased shifts of {B}rownian motion.
\newblock {\em Ann. Probab.}, 42:431--463, 2014.

\bibitem[11]{LT09}
G.~Last and H.~Thorisson.
\newblock Invariant transports of stationary random measures and
  mass-stationarity.
\newblock {\em Ann. Probab.}, 37:790--813, 2009.

\bibitem[12]{LT02}
T.~M. Liggett.
\newblock Tagged particle distributions or how to choose a head at random.
\newblock In {\em In and out of equilibrium ({M}ambucaba, 2000)}, volume~51 of
  {\em Progr. Probab.}, pages 133--162. Birkh\"auser, Boston, MA, 2002.

\bibitem[13]{MP10}
P.~M{\"o}rters and Y.~Peres.
\newblock {\em Brownian motion}.
\newblock Cambridge Series in Statistical and Probabilistic Mathematics.
  Cambridge University Press, Cambridge, 2010.

\bibitem[14]{MR15}
P.~M{\"o}rters and I.~Redl.
\newblock Skorokhod embeddings for two-sided {M}arkov chains.
\newblock {\em Probability Theory and Related Fields}, 165:483--508, 2016.

\bibitem[15]{Th00}
H.~Thorisson.
\newblock {\em Coupling, Stationarity, and Regeneration}.
\newblock Probability and its Applications (New York). Springer-Verlag, New
  York, 2000.

\end{thebibliography}
\end{document}